\DeclareSymbolFont{cyrletters}{OT2}{wncyr}{m}{n}
\DeclareMathSymbol{\Sha}{\mathalpha}{cyrletters}{"58}
\newtheorem{thm}{Theorem}[section]
\newtheorem{prop}[thm]{Proposition}
\newtheorem{lem}[thm]{Lemma}
\newtheorem{cor}[thm]{Corollary}
\theoremstyle{definition}
\newtheorem{rem}[thm]{Remark}
\def \Q{\mathbb{Q}}
\def \Z{\mathbb{Z}}
\DeclareMathOperator{\rank}{rank}
\def \eo{E_{1,s}}
\def \et{E_{2,t}}
\def \rat{\mathscr{F}}
\title{The relationship between face cuboids and elliptic curves}
\author{Takumi Yoshida\\
}
\date{\today}
\begin{document}

\maketitle

\begin{abstract}

A rational face cuboid is a cuboid that all of edges, two of three face diagonals and space diagonal have rational lengths.
We consider an elliptic curve
\[
\eo: y^2=x(x-(2s)^2)(x+(s^2-1)^2)
\]
for a rational number $s \neq 0, \pm 1$,
and define $\tilde{A}$ consisting of all pairs of a rational number $s$ and a non-torsion rational point $(\alpha, \beta ) \in \eo(\Q)$.
We construct a surjective map from $\tilde{A}$ to the set $\rat$ of equivalence classes of rational face cuboids,
and prove that this map is a $32:1$-map.
In this way, we show that the set $\rat$ has infinite elements.
Also, we prove that there are infinitely many $s \in \Q \setminus \{ 0,\pm 1 \}$ with $\rank \eo (\Q)>0$.
In this proof, we construct pairs of $s$ and $(\alpha, \beta) \in \eo (\Q)$ which are not parametric solutions.


\end{abstract}
%
\section{Introduction}
\label{intro}

A face cuboid is a cuboid that all of edges, two of three face diagonals and space diagonal have integer lengths.
Face cuboids are described in detail in, for example, \cite{l} Section 4.
In this paper, we call a cuboid a ``rational'' face cuboid when all of edges, two of three face diagonals and space diagonal have rational lengths.
Face cuboids and rational face cuboids are essentially the same objects by considering ratios of lengths.

For two rational face cuboids $ABCD\text{-}EFGH$ (as in the following picture) and $A'B'C'D'\text{-}E'F'G'H'$,
we write $ABCD\text{-}EFGH \sim A'B'C'D'\text{-}E'F'G'H'$ if they are similar with correspondence of points, namely there exists $s \in \Q$ such that
\[
BF=sB'F',
EF=sE'F',\text{ and }
GF=sG'F'
\]

\begin{tikzpicture}[style={very thick}]
\label{jiji}
	\def\x{3}
	\def\y{4}
	\def\z{2}
	\def\gosa{0.25}
	\coordinate (A) at (0,\y,\z);
	\coordinate (B) at (\x,\y,\z);
	\coordinate (C) at (\x,\y,0);
	\coordinate (D) at (0,\y,0);
	\coordinate (E) at (0,0,\z);
	\coordinate (F) at (\x,0,\z);
	\coordinate (G) at (\x,0,0);
	\coordinate (H) at (0,0,0);
	\draw[dashed](D)--(H)--(G);
	\draw[dashed](H)--(E);
	\draw[dashed] (H)--(F);
	\draw (A)--(B)--(C)--(D)--(A)--(E)--(F)--(G)--(C);
	\draw (B)--(F);
	\draw[dashed] (A)--(F);
	\draw[dashed] (D)--(F);
	\foreach \P in {A,...,E,G,H} \draw (\P)+(-\gosa,\gosa,0) node {\P};
	\draw (F)+(\gosa,0,0) node {F};
\end{tikzpicture}

We define the set $\rat$ to be the set of the equivalence classes, namely
\[
\rat=\{ \text{rational face cuboids} \} / \sim.
\]

For a rational number $s  \neq 0, \pm 1$, we define an elliptic curve $\eo / \Q$ by
\[
\eo: y^2=x(x-(2s)^2)(x+(s^2-1)^2).
\]
Denoting by $\eo(\Q)_\text{tor}$ the torsion subgroup of $\eo(\Q)$, we define the set $\tilde{A}$ by
\[
\tilde{A}= \left \{ (s,\alpha,\beta) \middle| s \in \Q \setminus \{ 0,\pm 1 \} \text{ and } (\alpha,\beta) \in \eo(\Q) \setminus \eo(\Q)_\text{tor} \right \}.
\]


\begin{thm}
\label{main}
Take $(s,\alpha, \beta) \in \tilde{A}$.
Let
\begin{equation}
\label{eqt}
t=\frac{s \alpha-2s(s^2-1)}{\alpha+2s^2(s^2-1)},
\end{equation}
and
\begin{equation}
\label{eqx}
\gamma=t^2(s-\frac{1}{s})^2.
\end{equation}
Then there exists a rational face cuboid $ABCD\text{-}EFGH$ with $BF=2|t|$, $EF=|t^2-1|$, and $HF=\sqrt{\gamma}$.
\end{thm}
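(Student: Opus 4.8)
The plan is to exhibit the cuboid explicitly, taking the edge lengths at $F$ to be $EF=|t^2-1|$, $BF=2|t|$ and a third edge $GF$ to be determined, and to reduce the whole problem to one square condition that the point $(\alpha,\beta)$ will supply. For the labelling in the figure the two face diagonals that must be rational are $AF=\sqrt{EF^2+BF^2}$ and $HF=\sqrt{EF^2+GF^2}$, and the space diagonal is $DF=\sqrt{EF^2+BF^2+GF^2}$. First I would record the free identity
\[
AF=\sqrt{(t^2-1)^2+4t^2}=\sqrt{(t^2+1)^2}=t^2+1\in\Q ,
\]
so one face diagonal costs nothing; it then remains to produce a rational $GF$ with $EF^2+GF^2=\gamma$ and with $EF^2+BF^2+GF^2$ a rational square.

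Next I would invert \eqref{eqt}. Its denominator $\alpha+2s^2(s^2-1)$ never vanishes, since substituting $\alpha=-2s^2(s^2-1)$ into the equation of $\eo$ forces $\beta^2<0$; solving for $\alpha$ gives $\alpha=2s(s^2-1)(st+1)/(s-t)$. Feeding this into the three factors on the right-hand side of the curve equation and simplifying yields the clean factorizations
\[
\alpha=\frac{2s(s^2-1)(st+1)}{s-t},\qquad \alpha-(2s)^2=\frac{2s(s^2+1)(st-1)}{s-t},\qquad \alpha+(s^2-1)^2=\frac{(s^4-1)(s+t)}{s-t},
\]
whose product collapses to $\beta^2=4s^2(s^4-1)^2(s^2t^2-1)(s+t)/(s-t)^3$. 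Multiplying through by $(s-t)^4$ then gives
\[
\left(\frac{\beta(s-t)^2}{2s(s^4-1)}\right)^{2}=(s^2-t^2)(s^2t^2-1),
\]
so $(s^2-t^2)(s^2t^2-1)$ is the square of the rational number $\delta:=\bigl|\beta(s-t)^2/(2s(s^4-1))\bigr|$ (here $s^4-1\neq0$ as $s\neq0,\pm1$). This is the step where the elliptic curve actually does the work.

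Then I would assemble the cuboid. Using $(s-\tfrac1s)^2=(s^2-1)^2/s^2$ one computes directly
\[
\gamma-EF^2=\gamma-(t^2-1)^2=\frac{t^2(s^2-1)^2-s^2(t^2-1)^2}{s^2}=\frac{(s^2-t^2)(s^2t^2-1)}{s^2}=\Bigl(\tfrac{\delta}{|s|}\Bigr)^{2},
\]
so I set $GF=\delta/|s|\in\Q$. Then $HF=\sqrt{EF^2+GF^2}=\sqrt{\gamma}$, which is rational because $\gamma=\bigl(t(s-\tfrac1s)\bigr)^2$ by \eqref{eqx}, and
\[
EF^2+BF^2+GF^2=(t^2+1)^2+\gamma-(t^2-1)^2=4t^2+\gamma=t^2\Bigl(\bigl(s-\tfrac1s\bigr)^2+4\Bigr)=t^2\bigl(s+\tfrac1s\bigr)^2,
\]
so the space diagonal $DF=\bigl|t\bigl(s+\tfrac1s\bigr)\bigr|$ is rational as well.

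The last, and I expect the only delicate, point is nondegeneracy: the three edges must be nonzero. Here $GF=0\iff\beta=0$, excluded since $(\alpha,\beta)$ is non-torsion; $BF=0\iff t=0\iff\alpha=2(s^2-1)$, impossible because then the curve equation gives $\beta^2=-4(s^2-1)^2(s^2+1)^2<0$; and $EF=0\iff t=\pm1$. For $t=1$ the inversion formula gives $(\alpha,\beta)=\bigl(2s(s+1)^2,\ \pm2s(s+1)^2(s^2+1)\bigr)$, and a short computation with the duplication formula shows $2(\alpha,\beta)=\bigl((2s)^2,0\bigr)$, so this point has order $4$; the case $t=-1$ is symmetric and again produces a point of order $4$. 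Since $(\alpha,\beta)$ is non-torsion, neither occurs, so $EF,BF,GF>0$ and $ABCD\text{-}EFGH$ is a genuine rational face cuboid with $BF=2|t|$, $EF=|t^2-1|$, $HF=\sqrt{\gamma}$. The part that requires real care is precisely this $t=\pm1$ exclusion, because it needs the $2$-power torsion structure of $\eo$ rather than merely the non-torsion hypothesis.
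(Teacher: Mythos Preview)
Your proof is correct and follows essentially the same route as the paper: check that the denominator of \eqref{eqt} is nonzero, rule out $t=0,\pm1$, and then use the key factorization $\gamma-(t^2-1)^2=s^{-2}(s^2-t^2)(s^2t^2-1)$ to see that this quantity---and hence the third edge, the second face diagonal $\sqrt{\gamma}$, and the space diagonal $\sqrt{\gamma+4t^2}$---is a nonzero rational square supplied by $\beta^2$. The one noteworthy difference is your handling of $t=\pm1$: you verify directly with the duplication map that the corresponding point satisfies $2(\alpha,\beta)=((2s)^2,0)$ and hence has order $4$, whereas the paper simply quotes its torsion lemma (whose proof invokes Mazur's theorem), so your argument is a bit more self-contained at that step.
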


From this theorem, we get a map
\[
\tilde{A} \ni  (s,\alpha, \beta) \mapsto (\text{the equivalent class of }ABCD\text{-}EFGH) \in \rat .
\]

\begin{thm}
\label{inverse}
The map $\tilde{A} \to \rat$ is surjective.

\end{thm}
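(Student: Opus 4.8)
The plan is to reverse the construction of Theorem~\ref{main} explicitly: starting from an arbitrary rational face cuboid, I normalize it by a similarity and then read off a triple in $\tilde A$ that produces it.

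First I would normalize. Given a rational face cuboid $\mathcal C$, its two rational face diagonals share a common edge; call it $u$ and let $v,w>0$ be the other two edges, so that $\sqrt{u^2+v^2}$, $\sqrt{u^2+w^2}$ and the space diagonal $\sqrt{u^2+v^2+w^2}$ are rational. Since $u:v$ is a rational Pythagorean ratio, one can write $u=\lambda|t^2-1|$, $v=2\lambda|t|$ with $\lambda\in\Q_{>0}$ and $t\in\Q\setminus\{0,\pm1\}$; rescaling $\mathcal C$ (which does not change its class in $\rat$) I may take $\lambda=1$. Then $w\in\Q_{>0}$, and putting $d:=\sqrt{(t^2-1)^2+w^2}\in\Q$ and $\ell:=\sqrt{(t^2+1)^2+w^2}\in\Q$ one has $\ell^2=d^2+4t^2$, and $\ell>2|t|$, $s\mapsto$ nothing yet.

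Next I would recover the elliptic data. Set $s:=\dfrac{d+\ell}{2|t|}\in\Q$. Using $\ell^2=d^2+4t^2$ one checks $s-\tfrac1s=\tfrac{d}{|t|}$, hence $t^2\bigl(s-\tfrac1s\bigr)^2=d^2$; in particular $s>1$ (so $s\in\Q\setminus\{0,\pm1\}$), $s>|t|$ (so $s\neq t$, using $w\neq0$), and $\gamma=d^2$ for $\gamma$ as in~\eqref{eqx}. Now invert~\eqref{eqt} to define $\alpha:=\dfrac{2s(s^2-1)(1+st)}{s-t}\in\Q$. Together with this definition of $\alpha$, a direct computation gives
\[
\alpha-(2s)^2=\frac{2s(s^2+1)(st-1)}{s-t},\qquad
\alpha+(s^2-1)^2=\frac{(s^2-1)(s^2+1)(s+t)}{s-t}.
\]
Multiplying the three factors and invoking the polynomial identity $t^2(s^2-1)^2-s^2(t^2-1)^2=(s^2-t^2)(s^2t^2-1)$, together with $\gamma=d^2$ and $(t^2-1)^2+w^2=d^2$ (which give $(s^2-t^2)(s^2t^2-1)=(sw)^2$ --- the reflection on the elliptic side of the fact that $\mathcal C$ is a \emph{face} cuboid), one obtains
\[
\alpha\bigl(\alpha-(2s)^2\bigr)\bigl(\alpha+(s^2-1)^2\bigr)=\left(\frac{2s^2(s^2-1)(s^2+1)\,w}{(s-t)^2}\right)^{\!2}.
\]
Hence $(\alpha,\beta)\in\eo(\Q)$ with $\beta:=\dfrac{2s^2(s^2-1)(s^2+1)w}{(s-t)^2}$, and $\beta\neq0$ since $s\neq0,\pm1$ and $w\neq0$. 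By construction, feeding $(s,\alpha,\beta)$ back into~\eqref{eqt}--\eqref{eqx} returns $t$ and $\gamma=d^2$, so Theorem~\ref{main} outputs a rational face cuboid with $BF=2|t|$, $EF=|t^2-1|$ and $HF=d$ (hence third edge $GF=\sqrt{HF^2-EF^2}=w$): this is the normalized $\mathcal C$ up to congruence, so its class in $\rat$ is $[\mathcal C]$.

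The remaining, and hardest, step is to verify $(s,\alpha,\beta)\in\tilde A$, i.e.\ that $(\alpha,\beta)$ is non-torsion. It is not $2$-torsion because $\beta\neq0$, and $\alpha$ is never either of the abscissas $2s(s+1)^2$ or $-2s(s-1)^2$ of the order-$4$ points (these equalities would force $t=1$, resp.\ $t=-1$), so the order of $(\alpha,\beta)$ lies outside $\{1,2,4\}$. Since $\eo(\Q)$ always contains $\Z/2\Z\times\Z/4\Z$ (one checks $((2s)^2,0)\in2\eo(\Q)$), Mazur's theorem leaves only the possibility that $(\alpha,\beta)$ has order $8$ with $\eo(\Q)_{\mathrm{tor}}\cong\Z/2\Z\times\Z/8\Z$, which forces $2s$ and $s^2+1$ both to be rational squares. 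Excluding this sporadic case --- or, where it occurs, passing to one of the other preimages furnished by the symmetries $t\mapsto-t$, $s\mapsto\pm s^{\pm1}$, $\beta\mapsto-\beta$ and translation by $\eo(\Q)[2]$ --- is the delicate point, and is precisely where the $32:1$ nature of the map is used.
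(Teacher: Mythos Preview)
Your approach is the same as the paper's: normalize the cuboid via a Pythagorean parametrization $BF=2|t|$, $EF=|t^2-1|$, recover $s$ from $s-\tfrac1s=\pm\sqrt{\gamma}/t$ (your formula $s=(d+\ell)/2|t|$ is one of the four sign choices the paper writes down), and then invert~\eqref{eqt} to obtain $\alpha$. The paper's argument is terser---it records the determinant $\begin{vmatrix} s & -2s(s^2-1)\\ 1 & 2s^2(s^2-1)\end{vmatrix}\neq 0$ to guarantee a unique $\alpha$ and leaves the verification that $(\alpha,\beta)\in\eo(\Q)\setminus\eo(\Q)_{\mathrm{tor}}$ to the reader as the reversal of the computation in the proof of Theorem~\ref{main}---whereas you carry out that reversal explicitly. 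That extra detail is fine and correct.

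Your last paragraph, however, invents a difficulty that is not there. You have already shown $(\alpha,\beta)$ has order outside $\{1,2,4\}$, and Lemma~\ref{tor} (proved in the paper immediately before this theorem) states that $\eo(\Q)_{\mathrm{tor}}\cong\Z/2\Z\times\Z/4\Z$ for \emph{every} $s\in\Q\setminus\{0,\pm1\}$. The proof of Lemma~\ref{tor} is precisely the argument that $\sqrt{2s}$ and $\sqrt{s^2+1}$ cannot both be rational, via the rank-zero curve $y^2=x^3-x$; hence the case $\eo(\Q)_{\mathrm{tor}}\cong\Z/2\Z\times\Z/8\Z$ never occurs. There is no ``sporadic case'' to exclude, no need to switch preimages, and no role for the $32{:}1$ structure in this step. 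Once you cite Lemma~\ref{tor}, your argument is complete.
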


More precisely, we prove that $\tilde{A} \to \rat$ is $32:1$-map.
The strategy to prove these results is as follows.

We will define the sets $A,B,\tilde{B}$, and $\tilde{B}'$ as follows.
Let $\tilde{A}$ be the set of elements $(s,\alpha,\beta)$ defined just before Theorem \ref{main}.
We define $A$ by
\[
A=\left \{ (s,\alpha) \in (\Q^\times)^{\oplus 2} \middle| \text{there exists $\beta \in \Q^\times$ such that }(s,\alpha, \beta) \in \tilde{A}  \right \}.
\]
Let $\et$ be an elliptic curve $y^2=(x-(t^2-1)^2)(x+(2t)^2)$.
We define $B,\tilde{B}$ and $\tilde{B}'$ by
\[
B=\left \{ (t,\gamma) \in (\Q ^{\times})^ {\oplus 2} \middle| 
\begin{array}{ll}
t \neq \pm 1, \text{ and} \\
\gamma, \gamma-(t^2-1)^2,\gamma+(2t)^2 \in \Q ^{\times} \text{ are all squares} \\
\end{array}
 \right\},
\]
\[
\tilde{B}=\left \{ (t,\gamma,\delta) \middle| t \in \Q \setminus \{ 0, \pm 1\} \text{ and } (\gamma,\delta) \in \et(\Q) \setminus \et(\Q)_\text{tor} \right \},
\]
and
\[
\tilde{B}'=\left \{ (t,\gamma,\delta) \in \tilde{B} \middle| \gamma,\gamma-(t^2-1)^2,\gamma+(2t)^2 \in (\Q^\times)^2 \right \}.
\]
We will define in Section \ref{ell-corr} a certain group $\Gamma$ with order $4$, which acts on $A$.

Define a map $B \to \rat$ as
\[
B \ni  (t, \gamma) \mapsto (\text{the equivalent class of }ABCD\text{-}EFGH) \in \rat,
\]
where $ABCD\text{-}EFGH$ is defined from $(t,\gamma)$ as in Theorem \ref{main}.
We make a map $A/\Gamma \to B$ from the map $A \to B,(s,\alpha) \mapsto (t,\beta)$, and prove that this map is bijective in Theorem \ref{onetoone} (\ref{bijective}).
We show that the map $\tilde{B}' \to B, (t,\gamma,\delta) \mapsto (t,\gamma)$ is $2:1$ in Corollary \ref{Bsurj}.
We show that the map $B \to \rat$ is $4:1$ in Theorem \ref{diagram} (\ref{4to1}).
As a result, we get the following diagram:


\[
\xymatrix{
\tilde{A} \ar[r]^{4:1} \ar[d]_{2:1}       & \tilde{B}' \ar@{^{(}->}[r] \ar[d]^{2:1}       & \tilde{B}\\
A \ar[r]^{4:1}_F \ar[d]_{4:1} &B \ar@{}[lu]|{\circlearrowright} \ar@{}@<-2.2ex>[dl]|{\circlearrowright} \ar[d]^{4:1} \\
A/ \Gamma \ar[ru]_{1:1}& \rat  .
}
\]
From this diagram, we conclude that the map $\tilde{A} \to \rat$ is $32:1$-map.
For more details, see Theorem \ref{diagram}.


In Section \ref{mainproof}, we determine the torsion group of elliptic curves $\eo$, and prove Theorems \ref{main} and \ref{inverse}.
In Theorem \ref{onetoone}, we give a bijective map $A/\Gamma \to B$ which is induced by $(s,\alpha) \mapsto (t,\gamma)$, where the numbers $s,\alpha,t,\gamma$ are as in Theorem \ref{main}.
In Section \ref{maps}, we define the above maps, and clarify the relationship between them.
Using this map, we prove the infiniteness of rational face cuboids in Corollary \ref{face}.
In Lemma \ref{self}, we define a bijective $\tilde{B} \to \tilde{A}$.
Using this map, we prove that the set $ \left \{ s \in \Q \setminus \{ 0, \pm 1 \}\middle| \rank \eo(\Q)>0 \right \}$ has infinite elements (Corollary \ref{rank}).

\section*{acknowledgement}

The author wishes to thank Professor Masato Kurihara, for giving much helpful advice.

Also the author thanks Hayato Yagi for discussion with him on this elliptic curve.
See Remark \ref{rem} for details.

This work was supported by JST SPRING, Grant Number JPMJST2123.

%
\section{Proof of Theorem \ref{main} and Theorem \ref{inverse}}
\label{mainproof}

In this section, we prove Theorem \ref{main} and Theorem \ref{inverse}.

\begin{lem}
\label{tor}
For any $s \in \Q \setminus \{ 0, \pm 1 \}$, the torsion points $ \eo(\Q)_{tor}$ of $\eo(\Q)$ is determined as
\begin{equation*}
\begin{split}
& \ \ \ \ \eo(\Q)_{tor} \\
&= \{O,(0,0),((2s)^2,0),(-(s^2-1)^2,0),(2s(s+1)^2,\pm 2s(s+1)^2(s^2+1)) \\
&  \ \ \ \ ,(-2s(s-1)^2, \pm 2s(s-1)^2(s^2+1))  \}\\
&\cong \Z/2\Z \times \Z/4\Z.
\end{split}
\end{equation*}
\end{lem}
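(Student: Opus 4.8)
The plan is to work with the $2$-torsion first and then use a combination of reduction modulo primes and the Lutz–Nagell / division-polynomial approach to pin down everything else. The curve $\eo: y^2 = x(x - (2s)^2)(x + (s^2-1)^2)$ has full rational $2$-torsion, namely $O$, $(0,0)$, $((2s)^2,0)$, $(-(s^2-1)^2,0)$, so $\eo(\Q)_{\mathrm{tor}} \supseteq \Z/2\Z \times \Z/2\Z$. The eight points listed in the statement obviously lie on the curve (a direct substitution check), and one verifies that $P := (2s(s+1)^2, 2s(s+1)^2(s^2+1))$ satisfies $2P = ((2s)^2, 0)$, so $P$ has order $4$; thus $\eo(\Q)_{\mathrm{tor}} \supseteq \Z/2\Z \times \Z/4\Z$ and the listed set is exactly this subgroup. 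So the whole content is the \emph{upper} bound: showing there is nothing more.

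For the upper bound I would argue that $\eo(\Q)_{\mathrm{tor}}$ injects, for all but finitely many primes $p$, into $\widetilde{E}(\mathbb{F}_p)$ under reduction (this needs $p$ of good reduction and $p$ odd). Since $E$ already has a rational $4$-torsion point, the group $\eo(\Q)_{\mathrm{tor}}$ is a finite abelian group containing $\Z/2 \times \Z/4$; by Mazur's theorem the only possibilities are $\Z/2\times\Z/4$ and $\Z/2\times\Z/8$, so it suffices to rule out a rational point of order $8$ (equivalently, to rule out $2 \mid [\eo(\Q)_{\mathrm{tor}} : \Z/2\times\Z/4]$), and also to note $\Z/2\times\Z/4 \hookrightarrow \Z/2\times\Z/8$ is the only enlargement. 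To rule out order $8$: a point $R$ with $2R = P$ would, together with the full $2$-torsion, force $4 \mid \#\widetilde{E}(\mathbb{F}_p)/(\text{stuff})$; more cleanly, one checks that $P$ is not divisible by $2$ in $\eo(\Q)$ by the standard criterion — $P = 2R$ with $R$ rational iff $x(P)$, $x(P) - (2s)^2$, and $x(P) + (s^2-1)^2$ are all squares in $\Q$ (descent via the $2$-isogeny / the "$2$-descent on points" lemma for curves with full $2$-torsion). Here $x(P) = 2s(s+1)^2$, $x(P) - (2s)^2 = 2s(s-1)^2$, $x(P)+(s^2-1)^2 = (s^2+1)^2$: the last is a square, but $2s(s+1)^2$ and $2s(s-1)^2$ are squares iff $2s$ is a square, which fails for generic $s$; and one must check the remaining finitely-many-looking cases where $2s \in (\Q^\times)^2$ separately, showing that even then no common solution $R \in \eo(\Q)$ exists (the two conditions $2s = \square$ together with the constraints coming from the other $2$-torsion translates, e.g. divisibility of $(0,0)$ or $((2s)^2,0)$, are jointly unsatisfiable). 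The same halving criterion applied to the other order-$4$ point and to the three nonzero $2$-torsion points rules out any further $2$-power torsion, and reduction mod two well-chosen primes (with residue characteristic coprime to the relevant data, using that the discriminant is a fixed polynomial in $s$) kills odd torsion by bounding $\#\widetilde{E}(\mathbb{F}_p)$.

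The main obstacle I expect is the uniformity in $s$: the reduction-mod-$p$ bound needs a prime of good reduction whose existence and effectiveness must hold for \emph{every} $s \in \Q\setminus\{0,\pm1\}$ simultaneously, and the "$2s$ is a square" exceptional locus must be handled by hand, checking that the halving equations have no rational solution there — this case analysis, dealing with the sign of $s$, denominators, and the interaction of the four square conditions, is where the real work lies. An alternative that avoids choosing primes: run a full $2$-descent, i.e. compute the image of $\eo(\Q)/2\eo(\Q)$ in $(\Q^\times/(\Q^\times)^2)^2$ via the Weil map $x \mapsto (x,\ x-(2s)^2)$, show its torsion part is exactly $(\Z/2)^2$ coming from the known points, and combine with Mazur; I would present whichever is shorter, most likely the direct halving-criterion argument since the $4$-torsion point is explicit.
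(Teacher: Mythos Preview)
Your overall strategy matches the paper's: use Mazur to reduce to excluding a point of order $8$, then show the explicit $4$-torsion point $P=(2s(s+1)^2,\,2s(s+1)^2(s^2+1))$ is not in $2\eo(\Q)$ via the halving criterion. However, two concrete problems remain.

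First, your computation of the three differences is wrong. With $e_1=0$, $e_2=(2s)^2$, $e_3=-(s^2-1)^2$ one gets
\[
x(P)=2s(s+1)^2,\qquad x(P)-(2s)^2=2s(s^2+1),\qquad x(P)+(s^2-1)^2=(s+1)^2(s^2+1),
\]
not $2s(s-1)^2$ and $(s^2+1)^2$. So the condition ``$P\in 2\eo(\Q)$'' is that \emph{both} $2s$ and $s^2+1$ are rational squares, not merely $2s$.

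Second, and more importantly, you correctly identify the locus where these squareness conditions hold as ``where the real work lies,'' but you have no plan to dispose of it, and it is not a finite or case-by-case matter: each of $\{s:2s\in(\Q^\times)^2\}$ and $\{s:s^2+1\in(\Q^\times)^2\}$ is infinite. The paper supplies exactly the missing idea here. Suppose $u:=\sqrt{2s}$ and $v:=\sqrt{s^2+1}$ lie in $\Q_{>0}$; set $U=v+s$ and $V=uU$. Then a one-line check gives $U^3-U=2s(v+s)^2=V^2$, so $(U,V)$ is a rational point on $y^2=x^3-x$ with $V\neq 0$, contradicting the well-known fact that this curve has rank $0$ and $E(\Q)=E[2]$. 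This reduction to the congruent-number curve is the entire content of the upper bound; your descent and reduction-mod-$p$ alternatives would not yield a uniform argument in $s$, whereas this does. (For $s<0$ one runs the same computation with the other $4$-torsion point $P'=(-2s(s-1)^2,\dots)$, or simply notes that $\eo=E_{1,-s}$, so WLOG $s>0$.)
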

\begin{proof}
We can easily show that the above 8 points are torsion, which proves the inclusion $\eo(\Q)_{tor} \supset \Z/2\Z \times \Z/4\Z$.
By Mazur's torsion theorem (\cite{M} Theorem 2), the group $\eo(\Q)_{tor}$ is either $\Z/2\Z \times \Z/4\Z$ or $\Z/2\Z \times \Z/8\Z$.
So we just need to prove the non-existence of a point with exact order $8$.
Assume the group is isomorphic to $\Z/2\Z \times \Z/8\Z$.
The point $P=(2s(s+1)^2,2s(s+1)^2(s^2+1)) \in \eo$ is a point with exact order $4$, so all points $Q=(x,y) \in \eo$ with $2Q=P$ has to be rational.
Then, we have
\[
x=2s(s+1)^2 \pm (s^2+1)(s+1)\sqrt{2s} \pm (s+1)\{ (s+1)\sqrt{2s}+2s \} \sqrt{s^2+1}
\]
(with any double sign).
Since these are all rational numbers, the numbers $u:=\sqrt{2s}, v=\sqrt{s^2+1}>0$ are rational.
Define $U$ and $V$ as
\[
U=v+\frac{u^2}{2}(>0), V=uU.
\]
Then, the point $(U,V)$ is on the elliptic curve
\[
E: y^2=x^3-x.
\]
However, we know $E(\Q)=E(\Q)[2]$, and so $V=0$.
This contradicts the fact $u,U>0$.
As a result, the elliptic curve $\eo$ does not have a point with exact order $8$.
Thus we obtain the isomorphism
\[
\eo(\Q)_{tor} \cong \Z/2\Z \times \Z/4\Z.
\]

\end{proof}

\begin{proof}[Proof of Theorem \ref{main}]
First, we prove $\alpha+2s^2(s^2-1) \neq 0$ and $t \neq 0,\pm1$.
The condition $\alpha+2s^2(s^2-1) = 0$ is satisfied if and only if $\alpha=-2s^2(s^2-1)$.
But then, we get
\[
\beta^2=-\{ 2s^2(s^2-1)(s^2+1) \}^2<0,
\]
so it is a contradiction. Thus the number $\alpha+2s^2(s^2-1)$ is not $0$.
If $t \neq 0$, then $\alpha =2(s^2-1)$ by (\ref{eqt}). However, we get 
\[
\beta^2=-\{(s^2-1)(s^2+1)\} ^2 <0,
\]
so it is also a contradiction.
If $t=1$, then we have $\alpha=2s(s+1)^2$, and, by Theorem \ref{tor}, it  contradicts to the assumption that $(\alpha,\beta) \notin E_{1,s}(\Q)_{tor}$.
As the same reason, we get $t \neq -1$.

Next, we prove rational numbers $\gamma, \gamma-(t^2-1)^2$, and $\gamma+(2t)^2$ are square numbers.
The rational numbers $\gamma=t^2(s-\frac{1}{s})^2$ and $\gamma+(2t)^2=t^2(s+\frac{1}{s})^2$ are clearly square.
We have
\begin{equation*}
\begin{split}
\gamma-(t^2-1)^2&=t^2(s-\frac{1}{s})^2-(t^2-1)^2 \\
&=t^2s^2-t^4-1+\frac{t^2}{s^2} \\
&=s^{-2}(s+t)(s-t)(ts+1)(ts-1) \\
&= s^{-2} \cdot  \frac{\{ 2s (s^2-1)(s^2+1) \}^2 \alpha(\alpha-(2s)^2)(\alpha+(\alpha^2-1)^2)}{(\alpha+2s^2(s^2-1))^4} . \\
\end{split}
\end{equation*}
Since the rational number $\alpha(\alpha-(2s)^2)(\alpha+(s^2-1)^2)=\beta^2$ is a square, $\gamma-(t^2-1)^2$ is also a square number.

Finally, we prove  $\gamma, \gamma-(t^2-1)^2,\gamma+(2t)^2 \neq 0$. 
If $\gamma-(t^2-1)^2=0$, then we get $s= 0, \pm 1$ or $\alpha(\alpha-(2s)^2)(\alpha+(s^2-1)^2)=0$ from the above formula.
However, this contradicts the condition on $(s,\alpha,\beta)$.
Thus the number $\gamma-(t^2-1)^2$ is positive because it is a square.
Therefore the inequalities $\gamma,\gamma+(2t)^2>0$ also hold.
\end{proof}

\begin{proof}[Proof of Theorem \ref{inverse}]
Take $ABCD-EFGH \in \rat$.
We have $BF:EF:BE = 2t:(t^2-1):(t^2+1)$ for some rational number $t$, so we may assume $BF=2t$, $EF=t^2-1$, and $HF = \sqrt{\gamma}$.
By the equation (\ref{eqx}),
\[
s-\frac{1}{s}=\pm \frac{\sqrt{\gamma}}{t},
\]
i.e.
\begin{equation}
\begin{split}
\label{s}
s=\frac{\pm \sqrt{\gamma} \pm \sqrt{\gamma+(2t)^2}}{2t},
\end{split}
\end{equation}
which are all rational numbers since $\sqrt{\gamma+(2t)^2}=DF$.
These numbers are not $0, \pm 1$ since $t,\gamma \neq 0$.
Now
\[
\begin{vmatrix}
s & -2s(s^2-1) \\
1 & 2s^2(s^2-1)
\end{vmatrix}
=2s(s^2+1)(s^2-1)
\neq 0,
\]
therefore there exists $\alpha$ which satisfies the equation  (\ref{eqt}).
\end{proof}

%
\section{The induced bijection}
\label{ell-corr}

We define the sets $A$ and $B$ by
\[
A= \left \{ (s,\alpha) \in (\Q ^{\times})^ {\oplus 2} \middle| 
\begin{array}{ll}
s \neq \pm 1 \\
(\alpha,\sqrt{\alpha(\alpha-(2s)^2)(\alpha+(s^2-1)^2)}) 
 \in \eo(\Q) \setminus \eo(\Q)_{tor}  \\
\end{array}
\right \},
\]
and
\[
B=\left \{ (t,\gamma) \in (\Q ^{\times})^ {\oplus 2} \middle| 
\begin{array}{ll}
t \neq \pm 1 \\
\gamma, \gamma-(t^2-1)^2,\gamma+(2t)^2 \in \Q ^{\times} \text{ are square numbers} \\
\end{array}
 \right\}.
\]
In this section, we construct a surjection $F: A \to B$, and make a certain induced isomorphism $A/\Gamma \to B$.

\begin{prop}
\label{four}
Let $F: A \to B$ be the map defined by $(s,\alpha) \mapsto (t,\gamma)$ as in the equation (\ref{eqt}) and (\ref{eqx}).
Then the map $F$ is well-defined and surjective.
\end{prop}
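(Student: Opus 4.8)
The plan is to treat well-definedness and surjectivity separately; the first is essentially a restatement of Theorem \ref{main}. Given $(s,\alpha)\in A$, set $\beta=\sqrt{\alpha(\alpha-(2s)^2)(\alpha+(s^2-1)^2)}$, so that $(s,\alpha,\beta)\in\tilde{A}$. The proof of Theorem \ref{main} then shows $\alpha+2s^2(s^2-1)\neq 0$, so $t$ in (\ref{eqt}) is defined and satisfies $t\neq 0,\pm 1$, and it shows that $\gamma$, $\gamma-(t^2-1)^2$ and $\gamma+(2t)^2$ are nonzero rational squares. Since moreover $\gamma=t^2(s-\tfrac1s)^2\in\Q^\times$, this is exactly the statement $(t,\gamma)\in B$, and hence $F$ maps $A$ into $B$.

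For surjectivity, fix $(t,\gamma)\in B$ and invert the construction. Because $\gamma$ and $\gamma+(2t)^2$ are squares in $\Q^\times$, the numbers $s_\pm=\bigl(\sqrt{\gamma}\pm\sqrt{\gamma+(2t)^2}\bigr)/(2t)$ are rational, and $s_+s_-=-1$; hence $s_-=-1/s_+$ and $s_+-\tfrac1{s_+}=s_--\tfrac1{s_-}=\sqrt{\gamma}/t$, so each of $s_\pm$ is a root of $\gamma=t^2(X-\tfrac1X)^2$, and neither is $0$ (otherwise $t=0$) nor $\pm 1$ (otherwise $\gamma=0$). Since $s_+s_-=-1\neq t^2$, at most one of $s_\pm$ equals $t$, so I may fix $s\in\{s_+,s_-\}$ with $s\neq t$. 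Rewriting (\ref{eqt}) as the linear equation $(s-t)\alpha=2s(s^2-1)(1+st)$, which now has the unique solution $\alpha:=2s(s^2-1)(1+st)/(s-t)\in\Q$, I obtain a pair $(s,\alpha)$ satisfying (\ref{eqt}) with the given $t$, and hence (\ref{eqx}) with the given $\gamma$ by the choice of $s$. It remains to check that $(s,\alpha)\in A$.

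For this I use the identity from the proof of Theorem \ref{main},
\[
\gamma-(t^2-1)^2=\frac{\{2s(s^2-1)(s^2+1)\}^2\;\alpha(\alpha-(2s)^2)(\alpha+(s^2-1)^2)}{s^2\,(\alpha+2s^2(s^2-1))^4}.
\]
A short computation from the formula for $\alpha$ gives $\alpha+2s^2(s^2-1)=2s(s^2-1)(s^2+1)/(s-t)\neq 0$, so the identity rearranges to
\[
\alpha(\alpha-(2s)^2)(\alpha+(s^2-1)^2)=\left(\frac{2s^2(s^2-1)(s^2+1)}{(s-t)^2}\right)^{\!2}\bigl(\gamma-(t^2-1)^2\bigr).
\]
Since $\gamma-(t^2-1)^2$ is a nonzero square, the left-hand side is a nonzero rational square; in particular $\alpha\in\Q^\times$ and there is a rational $\beta\neq 0$ with $(\alpha,\beta)\in\eo(\Q)$, so $(\alpha,\beta)$ is neither $O$ nor one of the $2$-torsion points $(0,0)$, $((2s)^2,0)$, $(-(s^2-1)^2,0)$. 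By Lemma \ref{tor} the only remaining torsion points have $\alpha\in\{2s(s+1)^2,-2s(s-1)^2\}$, but substituting either value into (\ref{eqt}) yields $t=1$ or $t=-1$, contradicting $(t,\gamma)\in B$. Hence $(\alpha,\beta)$ is non-torsion, so $(s,\alpha)\in A$ and $F(s,\alpha)=(t,\gamma)$, which proves surjectivity.

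The two steps that need genuine attention are the passage to the square root $s$ with $s\neq t$ (without which (\ref{eqt}) has no finite solution $\alpha$) and the use of the explicit torsion classification of Lemma \ref{tor} to exclude the two order-$4$ values of $\alpha$; the remaining verifications are direct substitutions into the identity of Theorem \ref{main}.
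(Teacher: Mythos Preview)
Your proof is correct and follows the same route as the paper, which simply cites Theorem~\ref{main} for well-definedness and Theorem~\ref{inverse} for surjectivity. You in fact supply details the paper leaves implicit---most notably the choice of a root $s\neq t$ so that (\ref{eqt}) can be solved for a finite $\alpha$, and the appeal to Lemma~\ref{tor} to exclude the order-$4$ torsion values of $\alpha$---so your argument is more complete than the paper's two-line sketch.
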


\begin{proof}
The well-definedness of $F$ follows from Theorem \ref{main}.
The surjectivity follows from Theorem \ref{inverse}.
\end{proof}

Define two maps $\sigma, \tau : A \to A$ by
\[
\sigma (s,\alpha) =(-s, -(s^2-1)^2 \cdot \frac{\alpha-(2s)^2}{\alpha+(s^2-1)^2} ),
\]
and
\[
\tau (s,\alpha) = (\frac{1}{s}, -\frac{1}{s^2} \cdot \frac{4(s^2-1)^2}{\alpha}).
\]
It is not difficult to show the well-definedness of these maps.

\begin{thm}
\label{onetoone}

\begin{enumerate}[$(1)$]
\item
The group $\Gamma$ generated by $\sigma$ and $\tau$ is isomorphic to $\Z /2\Z \times \Z /2\Z$,
and acts on $A$.
\item
\label{ind}
For any $\rho \in \Gamma$, we have $F \circ \rho =F$.
\item
\label{bijective}
The induced map
\[
A/\Gamma \to B
\]
is bijective.
\end{enumerate}

\end{thm}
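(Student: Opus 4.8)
The plan is to dispose of assertions (1) and (2) by direct substitution---this is where the real work lies---and then to deduce (3) formally from (1), (2), Proposition \ref{four}, and a determinant already computed in the proof of Theorem \ref{inverse}.

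For (1), I would first verify that $\sigma$ and $\tau$ are involutions: writing $\sigma(s,\alpha)=(-s,\alpha')$ and noticing that $\alpha'$ involves $s$ only through $s^2$, one gets $\sigma^2=\mathrm{id}$ because the fractional-linear map $\alpha\mapsto-(s^2-1)^2\frac{\alpha-(2s)^2}{\alpha+(s^2-1)^2}$ is its own inverse (its associated $2\times2$ matrix squares to a scalar), and $\tau^2=\mathrm{id}$ follows analogously, using $\bigl((1/s)^2-1\bigr)^2=(s^2-1)^2/s^4$. Next I would confirm $\sigma\tau=\tau\sigma$ by computing both composites. To see the group does not collapse, note that $\sigma$ and $\tau$ act on the first coordinate by $s\mapsto-s$ and $s\mapsto1/s$; since $s\neq0,\pm1$ the four numbers $s,-s,1/s,-1/s$ are pairwise distinct, so $\Gamma$ surjects onto a Klein four-group, while $\Gamma$---generated by two commuting involutions---is itself a quotient of $\Z/2\Z\times\Z/2\Z$. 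Hence $\Gamma\cong\Z/2\Z\times\Z/2\Z$, and its action on $A$ is exactly the well-definedness of $\sigma,\tau$ together with these relations.

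For (2), since $\Gamma=\langle\sigma,\tau\rangle$ it suffices to prove $F\circ\sigma=F$ and $F\circ\tau=F$. Write $F=(t,\gamma)$ with $t$ given by (\ref{eqt}) and $\gamma=t^2(s-1/s)^2$ by (\ref{eqx}). The $\gamma$-coordinate needs nothing beyond the $t$-coordinate: applying $\sigma$ or $\tau$ replaces the first coordinate by $-s$ or $1/s$, and $\bigl(-s-\tfrac{1}{-s}\bigr)^2=\bigl(\tfrac1s-s\bigr)^2=\bigl(s-\tfrac1s\bigr)^2$, so $\gamma$ is preserved as soon as $t$ is. Thus everything reduces to showing that the right-hand side of (\ref{eqt}) is unchanged under $(s,\alpha)\mapsto\sigma(s,\alpha)$ and under $(s,\alpha)\mapsto\tau(s,\alpha)$; in each case one substitutes the explicit formula for the new pair into (\ref{eqt}) and simplifies. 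This is the main obstacle: the $\tau$-identity in particular requires care, since $s\mapsto1/s$ introduces several powers of $s$ in the denominators that must be cleared. Conceptually the identities are not accidents---$\sigma$ and $\tau$ are, up to the relabelling of the first coordinate, translations of $\eo$ by $2$-torsion points (using that $\eo=E_{1,-s}$ and that $E_{1,1/s}$ is isomorphic to $\eo$)---so the verification only records how a $2$-torsion translation interacts with (\ref{eqt}).

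For (3), note that by (2) the map $F$ is constant on $\Gamma$-orbits, hence factors as $\bar F\colon A/\Gamma\to B$, which is surjective because $F$ is (Proposition \ref{four}). For injectivity, suppose $F(s_1,\alpha_1)=F(s_2,\alpha_2)=(t,\gamma)$. Since $\gamma=t^2(s-1/s)^2$, both $s_1$ and $s_2$ are roots of the quartic $t^2(s^2-1)^2=\gamma s^2$, which is invariant under $s\mapsto-s$ and under $s\mapsto1/s$; as $s_1,-s_1,1/s_1,-1/s_1$ are four of its roots---distinct because $s_1\neq0,\pm1$---they exhaust its roots, so $s_2\in\{s_1,-s_1,1/s_1,-1/s_1\}$. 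Thus $s_2$ lies in the $\Gamma$-orbit of $s_1$ on the first coordinate, and there is $\rho\in\Gamma$ with $\rho(s_2,\alpha_2)=(s_1,\alpha_2')$ for some $\alpha_2'$. By (2), $F(s_1,\alpha_2')=F(s_2,\alpha_2)=F(s_1,\alpha_1)$; and for a fixed first coordinate $s=s_1$, formula (\ref{eqt}) exhibits $t$ as a fractional-linear function of $\alpha$ whose associated matrix has determinant $2s(s^2+1)(s^2-1)\neq0$---the determinant computed in the proof of Theorem \ref{inverse}---hence is injective in $\alpha$. Therefore $\alpha_2'=\alpha_1$, so $(s_1,\alpha_1)=\rho(s_2,\alpha_2)$ and the two lie in one $\Gamma$-orbit. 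Hence $\bar F$ is bijective. (The degeneracies that might otherwise interfere---vanishing of $\alpha+2s^2(s^2-1)$ or of $s^2-1$, or $t\in\{0,\pm1\}$---are all excluded already in Theorems \ref{main} and \ref{inverse}.)
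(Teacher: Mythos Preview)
Your proof is correct and follows essentially the same route as the paper: parts (1) and (2) are handled by direct computation (the paper simply writes ``by direct calculation'' at each step), and part (3) deduces injectivity by counting the four $s$-values over a given $(t,\gamma)$---your quartic $t^2(s^2-1)^2=\gamma s^2$ is exactly what equation (\ref{s}) encodes---and then recovering $\alpha$ uniquely from $(s,t)$ via the nonzero determinant already recorded in the proof of Theorem \ref{inverse}. Your version is more explicit in two places the paper leaves implicit: you verify that $\Gamma$ really has order four (not less) by observing that $s,-s,1/s,-1/s$ are pairwise distinct, and you spell out why the four preimages constitute a single $\Gamma$-orbit rather than merely a $\Gamma$-stable set; both are genuine points the paper glosses over, so your write-up is if anything a cleaner account of the same argument.
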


\begin{proof}
\begin{enumerate}[$(1)$]
\item
By direct calculation, we can easily see that $\sigma \circ \sigma = \text{id}_A$, $\tau \circ \tau = \text{id}_A$, and $\sigma \circ \tau = \tau \circ \sigma$,
and so we get $\Gamma \cong \Z /2\Z \times \Z /2\Z$.

\item
By calculation, we get $(F \circ \sigma) (s,\alpha) = F(s,\alpha)$ and $(F \circ \tau) (s,\alpha) = F(s,\alpha)$ for any $(s,\alpha) \in A$.

\item
By Proposition \ref{four}, we have only to prove that the induced map is injective.
Take $(t,\gamma) \in B$.
Then, by (\ref{s}) and (\ref{eqt}), there are exactly four elements in $A$ which are sent to $(t,\gamma) \in B$.
By (\ref{ind}), these four elements form a class in $ A/ \Gamma$.
\end{enumerate}
\end{proof}

%
\section{Several maps}
\label{maps}

In this section, we construct several maps induced by $F: A \to B$ (in Proposition \ref{four}),
and prove that the map $A \to \rat$ is a $32:1$-map.
Using this result, we prove that there are infinitely many classes of rational face cuboids.
We also prove that there are infinitely many rational numbers $s$ such that $\rank \eo (\Q)>0$.

Let $\et$ be an elliptic curve 
\[
\et: y^2=(x-(t^2-1)^2)(x+(2t)^2).
\]
In order to make the diagram in Section \ref{intro}, we prove the following two lemmas.

\begin{lem}
\label{self}
Let $t \neq 0, \pm 1$ and $(\gamma,\delta) \in \et(\Q)$.
Put
\[
s'=\frac{t+1}{t-1},
\]
\[
\alpha'=\left( \frac{2}{(t-1)^2} \right)^2 \gamma,
\]
and
\[
\beta'=\left( \frac{2}{(t-1)^2} \right)^3 \delta.
\]
Then, we have $(\alpha',\beta') \in E_{1,s'}(\Q)$.
Thus, the map
\begin{equation*}
\begin{split}
\et(\Q) & \to E_{1,s'}(\Q) \\
(\gamma, \delta) & \mapsto (\alpha' , \beta')
\end{split}
\end{equation*}
gives an isomorphism between $\et(\Q)$ and $ E_{1,s'}(\Q)$.
\end{lem}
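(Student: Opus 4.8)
The statement has two parts: first, that the explicit substitution $(\gamma,\delta)\mapsto(\alpha',\beta')$ lands in $E_{1,s'}(\Q)$; second, that it is an isomorphism of groups. For the first part I would simply verify that $\alpha'$ and $\beta'$ satisfy the defining Weierstrass equation of $E_{1,s'}$. Concretely, set $\lambda = \frac{2}{(t-1)^2}$, so that $\alpha'=\lambda^2\gamma$ and $\beta'=\lambda^3\delta$, and compute $s'=\frac{t+1}{t-1}$, hence $2s' = \frac{2(t+1)}{t-1}$, $(2s')^2 = \lambda^2(2t)^2\cdot\frac{(t-1)^4}{4}\cdot\frac{4}{(t-1)^4}$ — more carefully, $(2s')^2 = \frac{4(t+1)^2}{(t-1)^2}$, and $s'^2-1 = \frac{(t+1)^2-(t-1)^2}{(t-1)^2} = \frac{4t}{(t-1)^2}$, so $(s'^2-1)^2 = \frac{16t^2}{(t-1)^4} = \lambda^2(2t)^2$. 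Likewise $(2s')^2 = \lambda^2\cdot(t+1)^2(t-1)^2\cdot\frac{1}{?}$ — the point is that each of $x$, $x-(2s')^2$, $x+(s'^2-1)^2$ pulls back under $x=\lambda^2 x'$ to $\lambda^2$ times $x'$, $x'+(2t)^2$... wait, one must match $-(2s')^2$ with $-(t^2-1)^2$ after the right reindexing. I would therefore check directly that
\[
\lambda^{-6}\,\alpha'(\alpha'-(2s')^2)(\alpha'+(s'^2-1)^2) = \gamma(\gamma-(t^2-1)^2)(\gamma+(2t)^2),
\]
which forces $(2s')^2 = \lambda^{-2}(t^2-1)^2$ and $(s'^2-1)^2 = \lambda^{-2}(2t)^2$; both identities are quick algebra in $t$ using $\lambda=\frac{2}{(t-1)^2}$. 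Given these, $\beta'^2 = \lambda^6\delta^2 = \lambda^6\gamma(\gamma-(t^2-1)^2)(\gamma+(2t)^2) = \alpha'(\alpha'-(2s')^2)(\alpha'+(s'^2-1)^2)$, so $(\alpha',\beta')\in E_{1,s'}(\Q)$.

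**Isomorphism.** Once the map is shown to send $E_{2,t}$ into $E_{1,s'}$, the map $(\gamma,\delta)\mapsto(\lambda^2\gamma,\lambda^3\delta)$ is nothing but the standard admissible change of variables $x=\lambda^2 x'$, $y=\lambda^3 y'$ over $\Q$, which is an isomorphism of elliptic curves defined over $\Q$ (with inverse $(\alpha',\beta')\mapsto(\lambda^{-2}\alpha',\lambda^{-3}\beta')$) and in particular a group isomorphism $E_{2,t}(\Q)\xrightarrow{\sim}E_{1,s'}(\Q)$. I would just remark that $\lambda\in\Q^\times$ since $t\neq 1$, so the scaling is legitimate, and cite that scalings of this form are curve isomorphisms; I would also note that it sends $O$ to $O$ and respects the chord-tangent law because it is a linear change of the affine coordinates compatible with the line at infinity.

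**Main obstacle.** There is no conceptual difficulty here; the only thing to get right is the bookkeeping of which factor of the cubic for $E_{2,t}$ maps to which factor for $E_{1,s'}$ — i.e.\ verifying the two identities $(2s')^2=\lambda^{-2}(t^2-1)^2$ and $(s'^2-1)^2=\lambda^{-2}(2t)^2$ with $s'=\frac{t+1}{t-1}$, $\lambda=\frac{2}{(t-1)^2}$. This is routine but is the step where a sign or an exponent could slip, so I would carry it out explicitly rather than wave at it. The restriction $t\neq 0,\pm1$ is exactly what is needed for $s'$ to be defined and $\neq 0,\pm1$ (so that $E_{1,s'}$ is one of the curves under consideration) and for $\lambda$ to be a nonzero rational; I would record that compatibility in a sentence. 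Everything else — that a weighted-coordinate scaling by $(\lambda^2,\lambda^3)$ is a $\Q$-isomorphism of the two elliptic curves, hence of their groups of rational points — is standard and needs only to be invoked.
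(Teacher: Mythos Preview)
Your approach is correct and essentially identical to the paper's: both verify directly that $\alpha'(\alpha'-(2s')^2)(\alpha'+(s'^2-1)^2)=\lambda^6\,\gamma(\gamma-(t^2-1)^2)(\gamma+(2t)^2)=\lambda^6\delta^2=\beta'^2$ via the two elementary identities $(2s')^2=\lambda^{2}(t^2-1)^2$ and $(s'^2-1)^2=\lambda^{2}(2t)^2$, and then observe (as you make more explicit than the paper does) that the scaling $(x,y)\mapsto(\lambda^2 x,\lambda^3 y)$ with $\lambda\in\Q^\times$ is a standard $\Q$-isomorphism of elliptic curves. Note a small slip: where you write ``which forces $(2s')^2=\lambda^{-2}(t^2-1)^2$ and $(s'^2-1)^2=\lambda^{-2}(2t)^2$'' the exponent should be $+2$, as your own computation a few lines earlier (``$(s'^2-1)^2=\tfrac{16t^2}{(t-1)^4}=\lambda^2(2t)^2$'') already shows.
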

\begin{proof}
We have

\begin{equation*}
\begin{split}
& \alpha'(\alpha'-(2s')^2)(\alpha'+(s'^2-1)^2) \\
= & \left( \frac{2}{(t-1)^2} \right)^2 \gamma \left( \frac{2}{(t-1)^2} \right)^2 (\gamma - (t^2-1)^2)\left( \frac{2}{(t-1)^2} \right)^2 (\gamma + (2t)^2) \\
= & \left( \frac{2}{(t-1)^2} \right)^6 \delta ^2 \\
= & \beta'^2. \\
\end{split}
\end{equation*}
Therefore, we have $(\alpha',\beta') \in E_{1,s'}(\Q)$.
\end{proof}

\begin{lem}
\label{notin}
For any $(t,\gamma) \in B$, we have
\[
(\gamma,\sqrt{\gamma(\gamma-(t^2-1)^2)(\gamma+(2t)^2)}) \notin \et(\Q)_{tor}.
\]
\end{lem}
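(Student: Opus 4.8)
The plan is to use the explicit description of $\et(\Q)_{tor}$ that follows from Lemma \ref{self} and Lemma \ref{tor}, and then to eliminate each torsion point in turn. Writing $s'=\frac{t+1}{t-1}$ and $c=\frac{2}{(t-1)^2}$, the isomorphism of Lemma \ref{self} sends $(\gamma,\delta)\mapsto(c^2\gamma,c^3\delta)$, so it identifies $\et(\Q)_{tor}$ with $E_{1,s'}(\Q)_{tor}$. Pulling back the eight points of Lemma \ref{tor} (with $s$ replaced by $s'$) and simplifying the coordinates — using $s'+1=\frac{2t}{t-1}$, $s'-1=\frac{2}{t-1}$, $s'^2-1=\frac{4t}{(t-1)^2}$ — one finds that $\et(\Q)_{tor}\cong\Z/2\Z\times\Z/4\Z$ consists of $O$, the three $2$-torsion points $(0,0)$, $((t^2-1)^2,0)$, $(-(2t)^2,0)$, and four points of order $4$ whose $x$-coordinates are $2t^2(t^2-1)$ and $-2(t^2-1)$. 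So it suffices to show that, for $(t,\gamma)\in B$, the value $\gamma$ is none of $0$, $(t^2-1)^2$, $-(2t)^2$, $2t^2(t^2-1)$, $-2(t^2-1)$.

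The three $2$-torsion cases are immediate from the definition of $B$: one has $\gamma\in\Q^\times$, so $\gamma\neq 0$; $\gamma-(t^2-1)^2\in\Q^\times$, so $\gamma\neq(t^2-1)^2$; and $\gamma$ is a nonzero square, hence $\gamma>0$, while $-(2t)^2<0$. Thus $P:=(\gamma,\sqrt{\gamma(\gamma-(t^2-1)^2)(\gamma+(2t)^2)})$ is not a $2$-torsion point.

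The substantive part is ruling out the two order-$4$ $x$-coordinates, and this is the step I expect to be the main obstacle. Suppose first $\gamma=2t^2(t^2-1)$. A short computation gives $\gamma-(t^2-1)^2=(t^2-1)(t^2+1)$ and $\gamma+(2t)^2=2t^2(t^2+1)$. Since $\gamma$ and $\gamma+(2t)^2$ are squares and $t\neq0$, both $2(t^2-1)$ and $2(t^2+1)$ must be squares, say $2(t^2-1)=a^2$ and $2(t^2+1)=b^2$ with $a,b\in\Q$, and $a\neq0$ because $t\neq\pm1$. Then $b^2-a^2=4$ and $b^2+a^2=4t^2$, so $b^4-a^4=(4t)^2$. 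The case $\gamma=-2(t^2-1)$ is entirely analogous: there $\gamma-(t^2-1)^2=-(t^2-1)(t^2+1)$ and $\gamma+(2t)^2=2(t^2+1)$, the hypotheses force $2(1-t^2)=a^2$ and $2(1+t^2)=b^2$ with $a\neq0$, and $b^2-a^2=4t^2$, $b^2+a^2=4$, whence again $b^4-a^4=(4t)^2$.

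In both cases, setting $w=b/a$ and $v=4t/a^2$ turns $b^4-a^4=(4t)^2$ into $w^4-1=v^2$, and multiplying by $w^2$ shows that $(w^2,vw)$ lies on $E:y^2=x^3-x$. Invoking $E(\Q)=E(\Q)[2]$ — exactly the input already used in the proof of Lemma \ref{tor} — we get $vw=0$; but $v=0$ forces $t=0$ and $w=0$ forces $b=0$, i.e. $t^2=-1$, both impossible. This contradiction shows $\gamma$ is neither $2t^2(t^2-1)$ nor $-2(t^2-1)$, so none of the five forbidden values occurs and $P\notin\et(\Q)_{tor}$. (One could alternatively organize the order-$4$ step via $2$-descent: the conditions defining $B$ say precisely that $P\in2\et(\Q)$, and since $2\bigl(\et(\Q)_{tor}\bigr)=\{O,((t^2-1)^2,0)\}$, a torsion point in $2\et(\Q)$ would have to be $O$ or $((t^2-1)^2,0)$, which is again excluded by $\gamma\neq0,(t^2-1)^2$; but the elementary enumeration above is shorter to write out.)
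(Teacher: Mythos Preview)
Your proof is correct and follows essentially the same strategy as the paper: list the torsion $x$-coordinates of $\et$ via Lemma~\ref{tor} (through the isomorphism of Lemma~\ref{self}), discard the $2$-torsion values using the nonvanishing conditions in $B$, and reduce the two order-$4$ possibilities to the fact that $E:y^2=x^3-x$ has only $2$-torsion over $\Q$. The only real difference is in the algebra of that last reduction: for $\gamma=2t^2(t^2-1)$ the paper uses just the single condition that $\gamma-(t^2-1)^2=(t^2-1)(t^2+1)=t^4-1$ is a square, which immediately places $(t^2,\,t\sqrt{t^4-1})$ on $E$; you instead use the squareness of $\gamma$ and $\gamma+(2t)^2$ to get $2(t^2-1)=a^2$, $2(t^2+1)=b^2$, pass through $b^4-a^4=(4t)^2$, and then land on $E$. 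Both are valid and invoke the same arithmetic input, but the paper's route is a line shorter. Your parenthetical $2$-descent remark (that $(t,\gamma)\in B$ forces $P\in 2\et(\Q)$, whose torsion part is $\{O,((t^2-1)^2,0)\}$) is a nice conceptual shortcut the paper does not mention.
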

\begin{proof}
Assume $(\gamma,\sqrt{\gamma(\gamma-(t^2-1)^2)(\gamma+(2t)^2)}) \in \et(\Q)_{tor}$.
By Lemma \ref{tor}, we get $\gamma=2t^2(t^2-1)$ or $\gamma=-2(t^2-1)$.

When $\gamma=2t^2(t^2-1)$, the number
\[
\gamma-(t^2-1)^2=(t^2+1)(t^2-1)
\]
is a square.
So the elliptic curve $E:y^2=x(x+1)(x-1)$ has rational point $(t^2, t \sqrt{(t^2+1)(t^2-1)})$ over $\Q$.
However, we have the equation $E(\Q)=\{O, (0,0), (\pm 1, 0)  \}$, and this contradicts the assumption $t \neq 0, \pm 1$.

In the same way, the equation $\gamma=-2(t^2-1)$ does not hold.

From the above, we get $(\gamma,\sqrt{\gamma(\gamma-(t^2-1)^2)(\gamma+(2t)^2)}) \notin \et(\Q)_{tor}$.
\end{proof}

\begin{cor}
\label{Bsurj}
We define $\tilde{B}'$ by
\[
\tilde{B}'=
\left \{ (t,\gamma,\delta) \in (\Q ^{\times})^ {\oplus 3} \middle| 
\begin{array}{ll}
t \neq \pm 1 \\
\gamma, \gamma-(t^2-1)^2,\gamma+(2t)^2 \in \Q ^{\times} \text{ are square numbers} \\
(\gamma,\delta) \in \et(\Q)  \setminus \et(\Q)_\text{tor} \\
\end{array}
 \right\}.
\]
Then the natural map $\tilde{B}' \to B, (t,\gamma,\delta) \mapsto (t, \gamma)$ is surjective.
\end{cor}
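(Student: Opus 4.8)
The plan is to produce, for an arbitrary $(t,\gamma) \in B$, an explicit third coordinate $\delta$ so that $(t,\gamma,\delta)$ lies in $\tilde{B}'$ and is sent to $(t,\gamma)$ by the forgetful map. Observe first that the defining conditions of $\tilde{B}'$ that involve only $t$ and $\gamma$ — namely $t \neq \pm 1$ together with the fact that $\gamma$, $\gamma-(t^2-1)^2$, $\gamma+(2t)^2$ all lie in $\Q^\times$ and are squares — are literally the conditions defining $B$. So for $(t,\gamma)\in B$ nothing has to be checked on that side, and the entire content of the statement is to exhibit a suitable $\delta$.

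Next I would set $\delta=\sqrt{\gamma(\gamma-(t^2-1)^2)(\gamma+(2t)^2)}$. Since each of the three factors is a nonzero square of a rational number by the definition of $B$, their product is again a nonzero square, hence $\delta$ is a well-defined element of $\Q^\times$; and by construction $\delta^2=\gamma(\gamma-(t^2-1)^2)(\gamma+(2t)^2)$, i.e. $(\gamma,\delta)\in\et(\Q)$. The only mild point of care is that $\delta\neq 0$, which is exactly why the three factors were assumed to lie in $\Q^\times$ rather than merely in $\Q$.

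It then remains only to rule out that $(\gamma,\delta)$ is a torsion point, and this is precisely Lemma \ref{notin}. Invoking it yields $(\gamma,\delta)\in\et(\Q)\setminus\et(\Q)_\text{tor}$, so $(t,\gamma,\delta)\in\tilde{B}'$, and its image under $(t,\gamma,\delta)\mapsto(t,\gamma)$ is the point we started from; hence the map is surjective. I do not expect any genuine obstacle: the corollary is just the packaging of Lemma \ref{notin} with the elementary observation that a product of squares is a square, the nontrivial input (non-torsionness) having already been isolated in that lemma.
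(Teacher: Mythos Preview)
Your argument is correct and matches the paper's proof essentially verbatim: define $\delta=\sqrt{\gamma(\gamma-(t^2-1)^2)(\gamma+(2t)^2)}$, observe it is rational because the three factors are squares, and invoke Lemma~\ref{notin} to exclude torsion. Your additional remark that $\delta\in\Q^\times$ (not merely $\Q$) is a small clarification the paper leaves implicit.
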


\begin{proof}
Take any element $(t,\gamma) \in B$.
Let $\delta = \sqrt{\gamma(\gamma-(t^2-1)^2)(\gamma+(2t)^2)}$.
Since the numbers $\gamma, \gamma-(t^2-1)^2,\gamma+(2t)^2$ are square, the number $\delta$ is rational.
By Lemma \ref{notin}, we have $(\gamma, \delta) \in \eo(\Q) \setminus \eo(\Q)_\text{tor}$.
Therefore, the element $(t,\gamma, \delta)$ is in the set $\tilde{B}'$.
\end{proof}

\begin{thm}
\label{diagram}

\begin{enumerate}[$(1)$]
\item
\label{4to1}
We define $B$ by
\[
\tilde{B}=\left \{ (t,\gamma,\delta) \middle| t \in \Q \setminus \{ 0, \pm 1\} \text{ and } (\gamma,\delta) \in \et(\Q) \setminus \et(\Q)_\text{tor} \right \}.
\]
Then we have the following commutative diagram:
\[
\xymatrix{
\tilde{A} \ar[r]^{4:1} \ar[d]_{2:1}       & \tilde{B}' \ar@{^{(}->}[r] \ar[d]^{2:1}       & \tilde{B} \\ 
A \ar[r]^{4:1}_F \ar[d]_{4:1} &B \ar@{}[lu]|{\circlearrowright}  \ar@{}@<-2ex>[dl]|{\circlearrowright} \ar[d]^{4:1} \\
A/ \Gamma \ar[ru]_{1:1} & \rat
},
\]
where the correspondences of elements in these maps are as follows:
\[
\xymatrix{
(s,\alpha,\beta) \ar@{|->}[r]^{4:1} \ar@{|->}[d]_{2:1}       &(t,\gamma,\delta) \ar@{|->}[r] \ar[d]^{2:1}       & (t,\gamma,\delta) \\ 
(s,\alpha) \ar@{|->}[r]^{4:1}_F \ar@{|->}[d]_{4:1} &(t,\gamma) \ar@{}[lu]|{\circlearrowright}  \ar@{}@<-2ex>[dl]|{\circlearrowright} \ar@{|->}[d]^{4:1} \\
(s,\alpha) \ar@{|->}[ru]_{1:1} & ABCD\text{-}EFGH
}
\]
where the map $B \to \rat , (t,\gamma) \mapsto (\text{a rational face cuboid }ABCD\text{-}EFGH)$ is defined as in Theorem \ref{main}.

\item
\label{bij2}
The map $\tilde{B} \to \tilde{A},(t,\gamma,\delta) \mapsto (s', \alpha', \beta')$ as in Lemma \ref{self} is bijective.
\item
\label{nots}
Suppose that $\tilde{B} \to \tilde{A}$ is the map in (\ref{bij2}).
Then the composition $\tilde{A} \to \tilde{B}' \to \tilde{B} \to \tilde{A}$ is not the identity map.

\end{enumerate}
\end{thm}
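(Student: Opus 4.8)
The plan is to read the statement off from the numerology already established in parts (\ref{4to1}) and (\ref{bij2}), rather than by computing the composite self-map of $\tilde{A}$ by hand. Write $c\colon\tilde{A}\to\tilde{A}$ for the composition $\tilde{A}\to\tilde{B}'\hookrightarrow\tilde{B}\to\tilde{A}$. By part (\ref{4to1}) the first arrow $(s,\alpha,\beta)\mapsto(t,\gamma,\delta)$ is surjective with every fibre of cardinality exactly $4$; the middle arrow is the inclusion, hence injective; and by part (\ref{bij2}) the last arrow is bijective. Composing, every nonempty fibre of $c$ has cardinality $4$. In particular, as soon as $\tilde{A}\neq\emptyset$ (so that $c$ has a nonempty fibre), $c$ is not injective, whereas $\mathrm{id}_{\tilde{A}}$ certainly is; hence $c\neq\mathrm{id}_{\tilde{A}}$.

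The only point that still needs a word is that $\tilde{A}$ is nonempty, and I expect this bookkeeping — not any real difficulty — to be the main thing to handle carefully. It follows from what precedes: rational face cuboids exist (see Section \ref{intro}), so $\rat\neq\emptyset$; the map $B\to\rat$ is surjective by part (\ref{4to1}), so $B\neq\emptyset$; the map $F\colon A\to B$ is surjective by Proposition \ref{four}, so $A\neq\emptyset$; and $\tilde{A}\to A$ is surjective by the definition of $A$, so $\tilde{A}\neq\emptyset$. Equivalently one may exhibit a single explicit triple in $\tilde{A}$; once any element is in hand, the ``$4:1$'' does all the work.

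For completeness I would also record the element-level description that makes the failure of $c=\mathrm{id}$ visible directly. Unwinding the three arrows via Lemma \ref{self} gives $c(s,\alpha,\beta)=(s',\alpha',\beta')$ with
\[
s'=\frac{t+1}{t-1},\qquad t=\frac{s\alpha-2s(s^{2}-1)}{\alpha+2s^{2}(s^{2}-1)},
\]
so $c=\mathrm{id}_{\tilde{A}}$ would force $t=(s+1)/(s-1)$ for every $(s,\alpha,\beta)\in\tilde{A}$. But for fixed $s$ the map $\alpha\mapsto t$ is a non-constant M\"obius transformation — its determinant $2s(s^{2}+1)(s^{2}-1)$ is nonzero, as computed in the proof of Theorem \ref{inverse} — hence injective; and any $s$ appearing in $A$ has $\rank E_{1,s}(\Q)>0$, so $E_{1,s}(\Q)$ contains non-torsion points with infinitely many distinct $x$-coordinates and $t$ cannot be constantly $(s+1)/(s-1)$. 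Producing one explicit such $(s,\alpha,\beta)$ — for instance from any known rational face cuboid via the construction behind Theorem \ref{inverse} — would give a fully self-contained alternative proof.
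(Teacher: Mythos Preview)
Your argument for part (\ref{nots}) is correct, and your primary approach is genuinely different from the paper's. The paper proceeds by direct computation: composing the formulas of Theorem \ref{main} and Lemma \ref{self}, it records that the $s$-coordinate of $c(s,\alpha,\beta)$ is
\[
\frac{(s+1)\alpha + 2s(s-1)^{2}(s+1)}{(s-1)\alpha - 2s(s-1)(s+1)^{2}},
\]
which visibly depends on $\alpha$ and hence cannot be identically $s$. Your main argument is instead a soft counting one: the first leg is $4{:}1$ onto $\tilde{B}'$, the inclusion is injective, and the last leg is bijective, so every nonempty fibre of $c$ has exactly four elements and $c$ cannot be injective once $\tilde{A}\neq\emptyset$. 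This is cleaner and entirely computation-free; on the other hand, the paper's explicit formula is quoted verbatim in the proof of Corollary \ref{rank}, so its computational route has downstream value that your counting argument alone does not supply. Your ``for completeness'' paragraph is essentially the paper's argument, spelled out more carefully (the paper leaves the non-constancy of $s'$ in $\alpha$ implicit).

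One small point: your justification of $\tilde{A}\neq\emptyset$ via ``rational face cuboids exist (see Section \ref{intro})'' is a little loose, since the introduction does not actually exhibit one. As you yourself note, the clean fix is to name a single explicit element of $\tilde{A}$; the point $(s,\alpha,\beta)=(5/3,-20/27,1120/243)$ used in Corollary \ref{face} does the job.
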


\begin{proof}  
\begin{enumerate}[$(1)$]
\item
By Theorem \ref{onetoone} (\ref{bijective}) and Corollary \ref{Bsurj}, we have only to show that the map $B \to \rat$ is a $4:1$ map.
The surjectivity is clear.
In order for $(t_1,\gamma_1),(t_2,\gamma_2) \in B$ to define the congruent cuboid, it is necessary and sufficient that
\[
2|t_1|:|t_1^2-1|:\sqrt{\gamma_1}=2|t_2|:|t_2^2-1|:\sqrt{\gamma_2},
\]
i.e. $(t_2,\gamma_2)=(\pm t_1,\gamma_1),(\pm \frac{1}{t_1},\frac{\gamma_1}{t_1^2})$.
Thus, the map is $4:1$.

\item
It is clear by Lemma \ref{self}.
\item
By Theorem \ref{main} and Lemma \ref{self}, the composition $\tilde{A} \to \tilde{B}' \to \tilde{B} \to \tilde{A}$ maps $(s, \alpha, \beta)$ to an element whose $s$-coordinate is $\frac{ (s+1)\alpha+2s(s-1)^2(s+1)}{(s-1)\alpha-2s(s-1)(s+1)^2}$.
\end{enumerate}
\end{proof}

\begin{rem}
\label{rem}
Hayato Yagi studied in his master's course elliptic curves $E_{a,b}:y^2=x(x-a^2)(x+b^2)$ such that $a$ and $b$ are rational numbers so that $a^2+b^2$ is a square number.
This elliptic curve is isomorphic to $\eo$ when $a:b=2s:(s^2-1)$. In this sense, elliptic curves $E_{a,b}$ and $\eo$ are essentially the same.
He determined the torsion points of $E_{a,b}(\Q)$.
The author's proof of Lemma \ref{tor} is based on his proof.
He also gave the author many examples of $a$ and $b$ such that the rank of $E_{a,b}(\Q)$ is $1$.
For example, he found that the elliptic curve $\eo (\Q )$ has rank 1 when $s=5/3$.
This result is used in the proof of the following two corollaries.
\end{rem}

\begin{cor}
\label{face}
There are infinitely many kinds of rational face cuboids up to isomorphism.
\end{cor}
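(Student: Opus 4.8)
The plan is to deduce the corollary from two facts that are already available: that the map $\tilde{A} \to \rat$ constructed above is surjective with finite fibers, and that $\eo(\Q)$ can have positive rank for a suitable $s$.

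First I would observe that, by Theorem \ref{inverse} together with Theorem \ref{diagram}, the map $\tilde{A} \to \rat$ is surjective and is a $32:1$ map; in particular every fiber is finite, of cardinality $32$. Since a surjection with finite fibers from an infinite set necessarily has infinite image, it suffices to prove that the set $\tilde{A}$ is infinite.

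To show that $\tilde{A}$ is infinite, I would fix a single rational number $s$ with $\rank \eo(\Q) > 0$; by Remark \ref{rem} one may take $s = 5/3$. Then $E_{1,5/3}(\Q)$ is an infinite group, whereas $E_{1,5/3}(\Q)_\text{tor}$ is finite (it has order $8$ by Lemma \ref{tor}). Hence $E_{1,5/3}(\Q) \setminus E_{1,5/3}(\Q)_\text{tor}$ is infinite, and so $\{(5/3,\alpha,\beta) : (\alpha,\beta) \in E_{1,5/3}(\Q) \setminus E_{1,5/3}(\Q)_\text{tor}\} \subseteq \tilde{A}$ is infinite. Therefore $\tilde{A}$ is infinite, and consequently $\rat$ is infinite.

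I do not expect any real obstacle in carrying this out: once one knows the map $\tilde{A} \to \rat$ is finite-to-one and surjective, the argument is essentially formal, and the only genuinely arithmetic input is the existence of one value of $s$ with $\rank \eo(\Q) > 0$, which is quoted from Remark \ref{rem}. If one wished to avoid citing that remark, the alternative would be to exhibit an explicit rational point on $E_{1,5/3}$ and verify via Lemma \ref{tor} that it (or a suitable multiple) is not torsion; this is the only point at which a computation would be required, and it is routine.
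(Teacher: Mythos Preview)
Your argument is correct and follows essentially the same route as the paper: both fix $s=5/3$, use that $E_{1,5/3}(\Q)$ has positive rank (the paper exhibits the explicit non-torsion point $P=(-20/27,1120/243)$ rather than citing Remark~\ref{rem}), and then invoke a finite-to-one map to $\rat$ to conclude. The only cosmetic difference is that the paper pushes the infinitely many multiples of $P$ into $B$ via $F$ and then applies the $4{:}1$ map $B\to\rat$, whereas you stay in $\tilde{A}$ and apply the $32{:}1$ map $\tilde{A}\to\rat$ directly; the content is the same.
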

\begin{proof}
For example, when $s=5/3$, the point $P=(\alpha,\beta)=(-20/27,1120/243)$ is the rational point of $\eo$ which is not torsion.
Therefore the point $P$ has infinite order in $\eo$ (in fact, this elliptic curve has rank $1$).
For any integer $n \geq 1$, let $(\alpha_n,\beta_n)=[n]P$.
Then, the elements $F(s,\alpha_n) \in B$ are different.
By Theorem \ref{diagram}, the map $B \to  \rat $ is $4:1$, so there are infinite elements in $\rat$.
\end{proof}

\begin{cor}
\label{rank}
The set $ \left \{ s \in \Q \setminus \{ 0, \pm 1 \}\middle| \rank \eo(\Q)>0 \right \}$ has infinite elements.
\end{cor}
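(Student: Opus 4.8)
The plan is to bootstrap from the single positive‑rank curve we already have in hand — namely $\eo$ with $s=5/3$, which has rank $1$ by Yagi's computation recorded in Remark \ref{rem} and exploited in the proof of Corollary \ref{face} — and to propagate its non‑torsion rational points through the composite map $\Phi\colon\tilde{A}\to\tilde{A}$ factoring as
\[
\tilde{A}\to\tilde{B}'\hookrightarrow\tilde{B}\to\tilde{A},
\]
where the first arrow is the $4:1$ map of the diagram in Theorem \ref{diagram}, the middle one is the inclusion, and the last one is the bijection of Theorem \ref{diagram} (\ref{bij2}) (equivalently Lemma \ref{self}). The useful feature of $\Phi$ is that it alters the first coordinate in a controlled, non‑constant way, and an explicit formula for this alteration is already supplied by Theorem \ref{diagram} (\ref{nots}).

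Concretely, I would fix $s_0=5/3$ together with the non‑torsion point $P=(\alpha,\beta)=(-20/27,1120/243)\in\eo(\Q)$ from Corollary \ref{face}. For every integer $n\ge 1$ the multiple $(\alpha_n,\beta_n):=[n]P$ is again non‑torsion, so $(s_0,\alpha_n,\beta_n)\in\tilde{A}$, and hence $\Phi(s_0,\alpha_n,\beta_n)=(s_n,\hat{\alpha}_n,\hat{\beta}_n)\in\tilde{A}$ for well‑defined rationals $s_n$. By the very definition of $\tilde{A}$, this triple exhibits $s_n\in\Q\setminus\{0,\pm1\}$ together with a non‑torsion point $(\hat{\alpha}_n,\hat{\beta}_n)\in E_{1,s_n}(\Q)$; in particular $\rank E_{1,s_n}(\Q)>0$ for all $n\ge 1$. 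It therefore remains only to show that $\{\,s_n\mid n\ge 1\,\}$ is an infinite set.

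For this, Theorem \ref{diagram} (\ref{nots}) gives the closed form
\[
s_n=\frac{(s_0+1)\alpha_n+2s_0(s_0-1)^2(s_0+1)}{(s_0-1)\alpha_n-2s_0(s_0-1)(s_0+1)^2},
\]
which exhibits $s_n$ as the value at $\alpha_n$ of a fixed linear‑fractional transformation in the variable $x$; a short computation shows this transformation is injective on its domain (its associated determinant equals $-4s_0(s_0^2-1)(s_0^2+1)\neq 0$, using $s_0\neq 0,\pm1$). On the other hand, since $P$ has infinite order, $[n]P=\pm[m]P$ forces $n=m$ for $n,m\ge 1$, so $\alpha_n=\alpha_m\iff n=m$. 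Combining the two facts, the assignment $n\mapsto s_n$ is injective, and hence $\{\,s_n\mid n\ge 1\,\}$ is an infinite subset of $\{\,s\in\Q\setminus\{0,\pm1\}\mid\rank\eo(\Q)>0\,\}$, which is the claim.

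The only step carrying genuine content — and the one I would be most careful with — is this final distinctness claim: a priori, feeding infinitely many distinct points of the single curve $\eo$ at $s=5/3$ into the construction could recycle only finitely many target values of $s$. What rules this out is precisely the non‑vanishing of the determinant $-4s_0(s_0^2-1)(s_0^2+1)$ of the linear‑fractional formula in Theorem \ref{diagram} (\ref{nots}), combined with the elementary fact that the $x$‑coordinates of the multiples $[n]P$ of a non‑torsion point are pairwise distinct; everything else is a formal consequence of the commutative diagram in Theorem \ref{diagram}.
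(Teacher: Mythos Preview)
Your proposal is correct and follows essentially the same approach as the paper: both start from $s_0=5/3$ and the non-torsion point of Corollary \ref{face}, push the multiples $[n]P$ through the composite $\tilde{A}\to\tilde{B}'\hookrightarrow\tilde{B}\to\tilde{A}$, and read off the resulting $s$-values via the formula displayed in Theorem \ref{diagram} (\ref{nots}). Your write-up is in fact more careful than the paper's, which simply asserts that infinitely many $s$ arise; you supply the missing justification by checking that the M\"obius transformation has nonzero determinant $-4s_0(s_0^2-1)(s_0^2+1)$ and that the $x$-coordinates $\alpha_n$ of the multiples $[n]P$ are pairwise distinct.
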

\begin{proof}
Let $s=5/3$ and define $\alpha_n$ and $\beta_n$ the rational numbers as in the proof of Corollary \ref{face}.
The composition $\tilde{A} \to \tilde{B}' \to \tilde{B} \to \tilde{A}$ in Theorem \ref{diagram} (\ref{nots}) maps $(s, \alpha_n, \beta_n)$ to elements whose $s$-coordinates are
\[
\frac{ (s+1)\alpha_n+2s(s-1)^2(s+1)}{(s-1)\alpha_n-2s(s-1)(s+1)^2},
\]
as we saw in the proof of Theorem \ref{diagram} (\ref{nots}).
As a result, there are infinitely many $s$ so that $\eo$ has points with infinite order.
\end{proof}

\end{document}